\def\CC         {{\mathbb C}}
\def\PP         {{\mathbb P}}
\def\dual	{{\rm \vee}}
\newtheorem{theorem}{Theorem}[section]
\newtheorem{proposition}[theorem]{Proposition}
\theoremstyle{definition}
\newtheorem{definition}[theorem]{Definition}
\theoremstyle{remark}
\newtheorem{remark}[theorem]{Remark}
\begin{document}

\title{The class of the affine line is a zero divisor in the Grothendieck ring}

\author{Lev A. Borisov}
\address{Department of Mathematics\\
Rutgers University\\
Piscataway, NJ 08854}
\email{borisov@math.rutgers.edu}

\thanks{The author was partially supported by NSF grant DMS-1201466.
}

\begin{abstract}
We show that the class  of the affine line is a zero divisor in the Grothendieck ring of algebraic varieties over complex numbers. The argument is based on the Pfaffian-Grassmannian double mirror correspondence.
\end{abstract}

\maketitle

\section{Introduction}
The Grothendieck ring $K_0(Var/\CC)$ of complex algebraic varieties is a fundamental object of algebraic geometry.
It is defined as the quotient of the group of formal integer linear combinations $\sum_{i} a_i [Z_i]$ of isomorphism classes of complex algebraic varieties modulo the relations 
$$
[Z]-[U]-[Z\backslash U]
$$
for all open subvarieties $U\subseteq Z$. The product structure is induced from the Cartesian product.

\medskip
The main result of this paper is the following.

\bigskip
{\bf Theorem \ref{main}.}
The class $L$ of the affine line is a zero divisor in the Grothendieck ring of varieties over $\CC$.

\bigskip

\medskip
The class $L=[\CC^1]$ of the affine line  plays an important role in the study of $K_0(Var/\CC)$.  For example, it has been
proved in \cite{LL} that the quotient of $K_0(Var/\CC)$ by $L$ has a natural basis indexed by 
the classes of projective algebraic varieties up to stable birational equivalence. In other instances one needs to localize $K_0(Var/\CC)$ by $L$ (see \cite{DL,Litt}) so it is important to know whether $L$ is a nonzero divisor. While it has been shown 
in \cite{Poonen} that $K_0(Var/\CC)$ is not a domain, there remained a hope that $L$ is nonetheless a non-zero-divisor in $K_0(Var/\CC)$.

\medskip
This problem was brought to our attention by an elegant recent preprint of Galkin and Shinder \cite{GSh} in which the 
authors prove that if $L$ is a nonzero divisor in $K_0(Var/\CC)$ (a weaker condition that $L^2a=0$ implies $a\in\langle L\rangle$ in fact suffices) then
a rational smooth cubic fourfold in $\PP^5$ must have its Fano variety of lines birational to a symmetric square of a K3 surface. This paper puts a dent in this approach to (ir)rationality of cubic fourfolds.

\medskip
The consequence of our construction is another important result, which was pointed to us by Evgeny Shinder. A cut-and-paste conjecture (or question) of
Larsen and Lunts
\cite[Question 1.2]{LL}  asks whether any two algebraic varieties $X$ and $Y$ with $[X]=[Y]$ in the Grothendieck ring can be cut into into disjoint unions of pairwise isomorphic locally closed subvarieties.

\bigskip
{\bf Theorem \ref{Sh}.}
The cut-and-paste conjecture of Larsen and Lunts fails.

\medskip
The negative answer to this conjecture is important in view of its potential applications to rationality of motivic zeta functions, see \cite{DL}, \cite{LL2}.

\medskip
The main idea of the proof of Theorems \ref{main} and \ref{Sh} is to compare the two sides $X_W$ and $Y_W$ of the Pfaffian-Grassmannian double mirror correspondence. These are non-birational smooth Calabi-Yau threefolds which are derived equivalent. 
There is a natural variety (a frame bundle over the Cayley hypersurface of $X_W$) whose class in the Grothendieck ring can be expressed both in terms of $[X_W]$ and in terms of $[Y_W]$. This provides a relation
$$
\Big([X_W]-[Y_W]\Big)(L^2-1)(L-1)L^7=0
$$
in the Grothendieck ring, which then implies that $L$ is a zero divisor.

\bigskip
{\bf Acknowledgements.} This paper came about as a byproduct of joint work with Anatoly Libgober on higher dimensional version of Pfaffian-Grassmannian double mirror correspondence. The author is indebted to Prof. Libgober for stimulating conversations, useful references and comments on the preliminary version of the paper. I also thank Evgeny Shinder who pointed out that the construction of the paper gives a counterexample to the cut-and-paste conjecture of Larsen and Lunts, see Theorem \ref{Sh}.

\section{The construction}
\subsection{Pfaffian and Grassmannian double mirror Calabi-Yau varieties} 
Let $V$ be a $7$-dimensional complex vector space.
Let $W\subset \Lambda^2V^\dual$ be a generic $7$-dimensional space of  skew forms on $V$. 
These data encode two smooth Calabi-Yau varieties
$X_W$ and $Y_W$ as follows.

\begin{definition}
We define $X_W$ as a subvariety of the Grassmannian $G(2,V)$ of dimension two subspaces $T_2\subset V$
which is the locus of all $T_2\in G(2,V)$ with $w\Big\vert_{T_2} = 0$ for all $w\in W$.
We define 
$Y_W$ as a subvariety of the Pfaffian variety $Pf(V)\subset \PP\Lambda^2 V$ of skew forms on $V$
whose rank is less than $6$. It is defined as the intersection of $Pf(V)$ with $\PP W\subset \PP\Lambda^2 V$.
\end{definition}

The following proposition summarizes the properties of $X_W$ and $Y_W$ that will be used later.
\begin{proposition}\label{facts}
The following statements hold for a general choice of $W$.
\begin{itemize}
\item The varieties $X_W$ and $Y_W$ are smooth Calabi-Yau threefolds.
\item The varieties $X_W$ and $Y_W$ are not isomorphic,  or even birational, to each other. 
\item All forms $\CC w\in Y_W$ have rank $4$. All forms $\CC w \in  \PP W\backslash Y_W$ have rank $6$.
\end{itemize}
\end{proposition}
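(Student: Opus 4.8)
The plan is to establish the three items largely independently: the first two rest on the classical geometry of $G(2,V)$ and of the Pfaffian variety together with the work of R\o dland (see also Borisov--C\u ald\u araru and Kuznetsov), while the third is a dimension count. On the Grassmannian side of the first item, note that a skew form $w\in\Lambda^2V^\dual$ restricts on a plane $T_2$ to an element of $\Lambda^2T_2^\dual$, the fibre at $[T_2]$ of $\Lambda^2\mathcal{S}^\dual=\mathcal{O}(1)$ (with $\mathcal{S}$ the tautological subbundle of $G(2,V)$); thus $X_W$ is the common zero locus of the $7$-dimensional subspace $W\subset H^0(G(2,V),\mathcal{O}(1))=\Lambda^2V^\dual$. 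Since $\dim G(2,V)=10$ and $\mathcal{O}(1)$ is very ample, Bertini shows $X_W$ is smooth of dimension $3$ for general $W$; adjunction with $K_{G(2,V)}=\mathcal{O}(-7)$ gives $K_{X_W}=\mathcal{O}$; and the Lefschetz hyperplane theorem (applicable throughout, the relevant sections having dimension $\geq 3$) gives $\pi_1(X_W)=1$ and $H^2(X_W,\ZZ)\cong H^2(G(2,V),\ZZ)=\ZZ$, so $X_W$ is a Calabi--Yau threefold with $\mathrm{Pic}(X_W)=\ZZ$ generated by $\mathcal{O}(1)|_{X_W}$.

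On the Pfaffian side, the extra input is that $Pf(V)\subset\PP\Lambda^2V^\dual\cong\PP^{20}$ is singular, with singular locus the rank-$\leq 2$ forms, i.e.\ the cone over the Pl\"ucker-embedded $G(2,V^\dual)$, of dimension $10$; since $6+10<20$, a general $\PP W=\PP^6$ misses it, so $Y_W$ lies in the smooth locus of $Pf(V)$, where Bertini yields that $Y_W$ is smooth of the expected dimension $17-14=3$. For $K_{Y_W}=\mathcal{O}$ I would pass to the standard resolution $\widetilde{Pf}=\PP_{G(3,V)}(\Lambda^2\mathcal{Q}^\dual)\to Pf(V)$, which recovers a rank-$\leq 4$ form from a $3$-dimensional subspace of its kernel ($\mathcal{Q}$ being the rank-$4$ tautological quotient bundle), is an isomorphism over the smooth locus, and whose exceptional divisor restricts in each $\PP^5$ fibre to the Pl\"ucker quadric $G(2,4)$; the relative Euler sequence then identifies the dualizing sheaf of the smooth locus of $Pf(V)$ with $\mathcal{O}(-14)$, and adjunction with the $14$ hyperplanes defining $\PP W$ gives $K_{Y_W}=\mathcal{O}$. (All of this, including simple-connectedness of $Y_W$ and the Hodge number $h^{1,1}(Y_W)=1$, is in R\o dland's paper and may simply be quoted.)

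The third item is immediate from dimension counts. A form in $\PP W\setminus Y_W$ is not in $Pf(V)$, hence has rank $>4$, hence rank exactly $6$, since a skew form on the $7$-dimensional space $V$ has even rank at most $6$. A form in $Y_W\subset Pf(V)$ has rank $\leq 4$, and cannot have rank $\leq 2$ because the rank-$\leq 2$ locus has dimension $10$ and a general $\PP^6\subset\PP^{20}$ avoids it; so it has rank exactly $4$.

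The substantive point is non-birationality in the second item. Non-isomorphy is easy: $X_W$ and $Y_W$ are Calabi--Yau threefolds of Picard rank one, and their ample generators have different cubes, $\deg X_W=\deg G(2,V)=42$ while $\deg Y_W=\deg Pf(V)=14$. To upgrade this, I would use that a projective Calabi--Yau threefold $Z$ with $\mathrm{Pic}(Z)=\ZZ$ admits no flopping contraction $Z\to\bar Z$: such a contraction has $\rho(Z/\bar Z)\geq 1$, forcing $\rho(\bar Z)=\rho(Z)-\rho(Z/\bar Z)\leq 0$, impossible for the positive-dimensional projective variety $\bar Z$. By the three-dimensional minimal model program (birational minimal models are connected by chains of flops), any smooth projective threefold birational to $X_W$ is therefore isomorphic to it, so $X_W$ and $Y_W$ are not birational. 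The main obstacle I anticipate is establishing $\mathrm{Pic}(Y_W)=\ZZ$: no Lefschetz theorem is directly available (the natural linear sections live in the singular $Pf(V)$, and the resolution $\widetilde{Pf}$ carries a non-ample hyperplane class and has Picard rank two), so one must invoke R\o dland's computation $h^{1,1}(Y_W)=1$ — equivalently, the derived or deformation equivalence with $X_W$ — after which the hyperplane class, of cube $14$, is forced to be the ample generator.
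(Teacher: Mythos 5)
Your proposal is correct and follows essentially the same route as the paper, which simply cites \cite{Rodland} for smoothness, compares the degrees $42$ and $14$ of the ample generators for non-isomorphy, cites \cite{BC} for non-birationality (the same Picard-rank-one/no-flops argument you spell out), and uses the same codimension-$10$ count on the rank-$\leq 2$ locus for the third item. The only slip is the phrase ``any smooth projective threefold birational to $X_W$ is therefore isomorphic to it'' --- false as stated (take a blow-up of $X_W$); what you need, and implicitly use, is that $Y_W$ has trivial canonical bundle and is hence itself a minimal model, so the theorem that birational minimal models are connected by flops applies to the pair $(X_W,Y_W)$.
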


\begin{proof}
Smoothness of $X_W$ and $Y_W$ has been shown by R\o dland \cite{Rodland}. 
They are not isomorphic to each other because the
ample generators $D_X$ and $D_Y$ of their respective  Picard groups have $D_X^3=42$ and $D_Y^3=14$.
The statement that $X_W$ and $Y_W$ are not birational follows 
from the fact that they are non-isomorphic Calabi-Yau threefolds with  Picard number one, see 
 \cite{BC}.

\smallskip
The statement about the rank of the forms follows from the fact that $W$ is generic, since the locus of rank $2$ forms in
$\PP\Lambda^2V^\dual$ is of codimension $10$. Alternatively, if $\CC w\in Y_W$ has rank $2$, then $Y_W$ 
is automatically singular at  $\CC w$.
\end{proof}

\begin{remark}
The varieties $X_W$ and $Y_W$ are double-mirror to each other, in the sense that they have the same mirror family.
This is just a heuristic statement, but it does indicate that geometry of $X_W$ is intimately connected to that of $Y_W$.
For example, it was shown independently in  \cite{BC}  and  \cite{Kuznetsov} that $X_W$ and $Y_W$ 
have equivalent derived categories.
\end{remark}

\subsection{Cayley hypersurface and its frame bundle.}
The main technical tool of this paper is the so-called Cayley hypersurface of $X_W$. It is the hypersurface in
$G(2,V)\times \PP W$ which consists of pairs $(T_2, \CC w)$ with the property $w\Big\vert_{T_2}=0$.
The class of $X_W$ in the Grothendieck ring of varieties over $\CC$ is related to that of $H$ as follows.
\begin{proposition}\label{HX}
The following equality holds in the Grothendieck ring.
$$
[H] = [G(2,7)] [\PP^5] +[X_W] L^6
$$
\end{proposition}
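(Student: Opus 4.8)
The plan is to project $H\subset G(2,V)\times\PP W$ onto its first factor and to compute $[H]$ by cutting the base $G(2,V)$ into two pieces. Fix $T_2\in G(2,V)$. Since the space of skew forms on the two-dimensional space $T_2$ is the line $\Lambda^2 T_2^\dual$, restriction of forms gives a linear map $\rho_{T_2}\colon W\to\Lambda^2 T_2^\dual\cong\CC$, and the fiber of $\pi\colon H\to G(2,V)$ over $T_2$ is exactly $\PP(\ker\rho_{T_2})\subseteq\PP W$. By the very definition of $X_W$, the map $\rho_{T_2}$ vanishes identically if and only if $T_2\in X_W$. Hence over $X_W$ the fiber is all of $\PP W\cong\PP^6$, while over $G(2,V)\backslash X_W$ the map $\rho_{T_2}$ is surjective and the fiber is a $\PP^5$.

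To turn this pointwise description into an identity of classes I would globalize the construction. On $G(2,V)$, with $S\subset V\otimes\mathcal O$ the tautological rank-two subbundle, the maps $\rho_{T_2}$ are the fibers of a single morphism of vector bundles $\rho\colon W\otimes\mathcal O\to\Lambda^2 S^\dual$. On the open set $U=G(2,V)\backslash X_W$ the morphism $\rho$ is a surjection of vector bundles, so $\ker(\rho|_U)$ is a rank-six vector bundle and $\pi^{-1}(U)=\PP(\ker(\rho|_U))\to U$ is a Zariski-locally trivial $\PP^5$-bundle; on the closed subvariety $X_W$ the morphism $\rho$ vanishes identically, so $\pi^{-1}(X_W)=X_W\times\PP W$. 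Since a Zariski-locally trivial $\PP^r$-bundle over a base $B$ has class $[B]\cdot[\PP^r]$ in $K_0(Var/\CC)$, the cut-and-paste relation $[H]=[\pi^{-1}(U)]+[\pi^{-1}(X_W)]$ gives
$$
[H]=\big([G(2,7)]-[X_W]\big)\,[\PP^5]+[X_W]\,[\PP^6].
$$

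It then remains to substitute $[\PP^6]=[\PP^5]+L^6$ and simplify, which produces exactly $[H]=[G(2,7)][\PP^5]+[X_W]L^6$. The computation is essentially forced once the fibration structure is in hand; the one thing that genuinely needs checking is that $\rho$ drops rank precisely along $X_W$, and this is immediate because the assertion ``$w|_{T_2}=0$ for every $w\in W$'' is simultaneously the defining condition of $X_W$ and the condition $\rho_{T_2}=0$. For this reason I do not expect a real obstacle: the proposition is a direct fibration computation, with the only mild care needed being to phrase the two fiber types as an honest scissor decomposition of $H$ rather than merely a statement about fibers.
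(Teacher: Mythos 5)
Your proof is correct and follows essentially the same route as the paper: project $H$ to $G(2,V)$, split the base into $X_W$ (fiber $\PP^6$) and its complement (fiber $\PP^5$), and use Zariski local triviality of the two pieces. Your justification of local triviality over the complement (via the kernel of the bundle map $W\otimes\mathcal O\to\Lambda^2 S^\dual$) is a slightly more explicit version of the paper's argument, but the decomposition and final computation are identical.
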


\begin{proof}
Consider the projection of $H$ onto $G(2,V)$. The restriction of this map to the preimage of $X_W$ is a trivial
fibration with fiber $\PP W=\PP^6$. The restriction of it to the complement  of $X_W$ is a Zariski locally trivial
fibration with fiber $\PP^5$. Indeed, the hyperplanes of $w$ that vanish on a given $T_2$ can be 
Zariski locally identified with a fixed $\PP^5$ by projecting from a fixed point in $\PP W$. 
This gives 
$$
[H] =[X_W] [\PP^6] + ([G(2,7)]-[X_W])[\PP^5] =  [G(2,7)] [ P^5] +[X_W] ([\PP^6]-[\PP^5])
$$
which proves the claim.
\end{proof}

\begin{remark}
In the proof of Proposition \ref{HX} we used the statement that for  a Zariski locally trivial fibration $Z\to B$ with fiber $F$
there holds $[Z]=[B][F]$ in $K_0(Var/\CC)$. We will use this statement repeatedly in the subsequent arguments.
\end{remark}

We can project the Cayley hypersurface $H$ onto the second factor $\pi:H\to \PP W$. We will have different fibers depending on whether the image lies in $Y_W$ or not. While we would like to say that the restriction of $\pi$ to
the preimages of $Y_W$ and its complement are Zariski locally trivial, we do not know if this is true or not.
So instead of using $H$ itself we will pass to the frame bundle $\tilde H$ over $H$.

\begin{definition}
We denote by $\tilde H$ the frame bundle of $H$, i.e. the space of triples $(v_1,v_2,w)$ where $v_1$ and $v_2$
are linearly independent vectors in $V$ and $w$ is an element of $\PP W$ such that $w(v_1,v_2)=0$.
\end{definition}

\begin{remark}
Since $\tilde H$ is the frame bundle of the Zariski locally trivial vector bundle (pullback of the tautological subbundle 
on $G(2,V)$) on $H$, the fibration $\tilde H\to H$ is Zariski locally trivial. An easy calculation shows that 
\begin{equation}\label{tilde}
[\tilde H] = [H] (L^2-1)(L^2-L)
\end{equation}
in the Grothendieck ring.
\end{remark}

We now consider the projection $\tilde H\to \PP W$. Notice that we have 
\begin{equation}\label{disk}
\tilde H = \tilde H_{1} \sqcup \tilde H_2
\end{equation}
where $\tilde H_{1}$ is the preimage of $Y_W$ and $\tilde H_2$ is the preimage of its complement in $\PP W$.

\begin{proposition}\label{H1}
The following equality holds in the Grothendieck ring.
$$
[\tilde H_1] = [Y_W]\Big((L^3-1)(L^7-L) + (L^7-L^3)(L^6-L)\Big)
$$
\end{proposition}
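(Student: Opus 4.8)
The plan is to compute $[\tilde H_1]$ by studying the projection $\tilde H_1 \to Y_W$ and showing that it is Zariski locally trivial, then multiplying the class of the base $[Y_W]$ by the class of the fiber. The passage from $H$ to the frame bundle $\tilde H$ is precisely what makes this feasible: over a point $\CC w \in Y_W$, the fiber of $\tilde H_1$ consists of all ordered pairs of linearly independent vectors $(v_1,v_2)$ in $V$ with $w(v_1,v_2)=0$, and since by Proposition \ref{facts} every $\CC w \in Y_W$ has rank exactly $4$, this fiber is literally independent of the point — the rank condition pins down the form up to the action of $GL(V)$. So the first step is to identify $Y_W$ by a Zariski-open cover on which a continuous choice of the rank-$4$ form can be made (e.g.\ fixing a splitting $V = V_4 \oplus V_3$ adapted to the kernel), which then trivializes $\tilde H_1 \to Y_W$ over that cover and reduces the problem to a fiber count.

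Second, I would compute the class of this fiber, call it $\Phi$: the variety of ordered pairs $(v_1,v_2)$ of linearly independent vectors in a fixed $7$-dimensional $V$ such that a fixed rank-$4$ skew form $w$ (with $3$-dimensional kernel $K$) satisfies $w(v_1,v_2)=0$. I would stratify according to the position of $v_1$ relative to $K$. If $v_1 \in K$ (an $L^3-L$ worth of choices for $v_1$, projectivized appropriately — actually $v_1\in K\setminus 0$ contributes $L^3-1$), then $w(v_1,\cdot)=0$ identically, so $v_2$ ranges over all vectors not proportional to $v_1$: that is $L^7-L$ choices. If $v_1 \notin K$ (so $L^7-L^3$ choices), then $w(v_1,\cdot)$ is a nonzero linear functional, hence $v_2$ must lie in its $6$-dimensional kernel hyperplane, minus the line through $v_1$ (which lies in that hyperplane since $w$ is skew): $L^6-L$ choices. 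Summing,
$$
[\Phi] = (L^3-1)(L^7-L) + (L^7-L^3)(L^6-L),
$$
which matches the claimed fiber. Each stratum is itself a Zariski locally trivial fibration, so these class computations are legitimate, and the total space $\tilde H_1$ decomposes compatibly.

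Finally, combining the two steps — Zariski local triviality of $\tilde H_1 \to Y_W$ with fiber $\Phi$ — gives $[\tilde H_1] = [Y_W]\,[\Phi]$, which is the assertion. The main obstacle is the first step: verifying that $\tilde H_1 \to Y_W$ is genuinely Zariski locally trivial and not merely a fibration with constant fiber class. This requires exhibiting, Zariski-locally on $Y_W$, an algebraic trivialization of the rank-$4$ skew form $w$ up to the $GL(V)$-action; concretely, one covers $Y_W$ by opens on which the kernel $K_w$ and a complementary flag vary algebraically, and on each such open a linear change of frame brings $w$ to a standard normal form, after which the fiber is the fixed variety $\Phi$ above. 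This is the same ``straighten the bundle locally'' maneuver already invoked (via the remark following Proposition \ref{HX}) and poses no genuine difficulty here because the rank is constant on $Y_W$; it is exactly the constancy of rank, guaranteed by Proposition \ref{facts}, that we could not assume before passing to the frame bundle.
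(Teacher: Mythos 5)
Your proposal is correct, and the combinatorial core --- stratifying by whether $v_1$ lies in the $3$-dimensional kernel of the rank-$4$ form $w$, giving $(L^3-1)(L^7-L)$ and $(L^7-L^3)(L^6-L)$ --- is exactly the paper's. Where you differ is in how Zariski local triviality is organized. The paper never trivializes the whole map $\tilde H_1\to Y_W$: it first cuts $\tilde H_1$ into the strata $\tilde H_{1,1}$ ($v_1\in Ker(w)$) and $\tilde H_{1,2}$, and exhibits each as a tower of two Zariski locally trivial fibrations factoring through the space of pairs $(v_1,w)$; the only input needed is that, because the rank is constant on $Y_W$, the kernels form an algebraic rank-$3$ subbundle of $V\times Y_W$, and algebraic vector bundles (and complements of subbundles inside them, such as $\CC^7$ minus the line $\CC v_1$, or the $6$-dimensional kernel of $w(v_1,\cdot)$ minus $\CC v_1$) are automatically Zariski locally trivial. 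You instead trivialize $\tilde H_1\to Y_W$ globally as a fibration with the fixed fiber $\Phi$ and then compute $[\Phi]$ by the same stratification; this requires the extra lemma that a family of rank-$4$ skew forms admits, Zariski-locally on the base, a simultaneous normal form (local algebraic choice of a frame adapted to the kernel plus a symplectic basis on a complement). That lemma is true and routine, and you correctly flag it as the one point needing verification, so your route is valid; it buys the cleaner single identity $[\tilde H_1]=[Y_W][\Phi]$ at the cost of a stronger trivialization statement, while the paper's stratify-first order gets by with only the standard facts about vector bundles already invoked after Proposition \ref{HX}.
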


\begin{proof}
There is a subvariety $\tilde H_{1,1}$ in $\tilde H_1$ given by the condition $v_1\in Ker(w)$.
Forgetting $v_2$ realizes $\tilde H_{1,1}$  as a Zariski locally trivial fibration with fiber $\CC^7-\CC$ over the space of pairs  $(v_1,w)$ with $v_1\in Ker(w),~v_1\neq 0$. This in turn is a Zariski locally trivial fibration over
$Y_W$ with fiber $(\CC^3-{\rm pt})$, since all $\CC w\in Y_W$ have rank $4$. Putting all this together, we have 
$$
[\tilde H_{1,1}] = [Y_W](L^3-1)(L^7-L) 
$$
in the Grothendieck ring. Similarly, the complement $\tilde H_{1,2}$ of $\tilde H_{1,1}$ in $\tilde H_1$ 
satisfies
$$
[\tilde H_{1,2}] = [Y_W](L^7-L^3)(L^6-L).
$$
Indeed, $\tilde H_{1,2}$  forms a vector bundle of rank $6$ over the space of pairs 
$(v_1,w)$, since the condition $w(v_1,v_2)=0$ is now nontrivial.
The result of the proposition now follows from $[\tilde H_1]=[\tilde H_{1,1}]+[\tilde H_{1,2}]$.
\smallskip
\end{proof}

\begin{proposition}\label{H2}
The following equality holds in the Grothendieck ring.
$$
[\tilde H_2] =\Big([\PP^6]- [Y_W]\Big)\Big((L-1)(L^7-L) + (L^7-L)(L^6-L)\Big)
$$
\end{proposition}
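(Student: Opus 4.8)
The plan is to follow the proof of Proposition \ref{H1} almost verbatim, the only structural change being that every $\CC w\in\PP W\backslash Y_W$ has rank exactly $6$ by Proposition \ref{facts}, so $Ker(w)$ is one-dimensional rather than three-dimensional, and the role played there by $Y_W$ is now played by $\PP W\backslash Y_W$, whose class is $[\PP^6]-[Y_W]$. Since $\rk w$ is constant along $\PP W\backslash Y_W$, the kernels $Ker(w)$ assemble into a line subbundle of the trivial bundle $V\times(\PP W\backslash Y_W)$; this fact is used both to read off the dimensions of the fibers below and to guarantee that the fibrations are Zariski locally trivial, the latter being the only point that really needs attention.

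As in the proof of Proposition \ref{H1}, write $\tilde H_2=\tilde H_{2,1}\sqcup\tilde H_{2,2}$ according to whether $v_1\in Ker(w)$ or not. On $\tilde H_{2,1}$ the constraint $w(v_1,v_2)=0$ is vacuous, so forgetting $v_2$ exhibits $\tilde H_{2,1}$ as a Zariski locally trivial fibration with fiber $\CC^7-\CC$ (the vectors $v_2$ with $v_1,v_2$ linearly independent) over the space of pairs $(v_1,w)$ with $0\neq v_1\in Ker(w)$. The latter is the complement of the zero section in the kernel line bundle over $\PP W\backslash Y_W$, hence of class $([\PP^6]-[Y_W])(L-1)$, and so $[\tilde H_{2,1}]=([\PP^6]-[Y_W])(L-1)(L^7-L)$.

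On $\tilde H_{2,2}$ one has $w(v_1,\cdot)\neq 0$, so the space of pairs $(v_1,w)$ with $v_1\notin Ker(w)$ is a Zariski locally trivial fibration over $\PP W\backslash Y_W$ with fiber $\CC^7-\CC$, and for each such $(v_1,w)$ the admissible $v_2$ fill the kernel of the nonzero functional $w(v_1,\cdot)$ with the line $\CC v_1\subset Ker(w(v_1,\cdot))$ removed, i.e.\ a copy of $\CC^6-\CC$. This gives $[\tilde H_{2,2}]=([\PP^6]-[Y_W])(L^7-L)(L^6-L)$, and $[\tilde H_2]=[\tilde H_{2,1}]+[\tilde H_{2,2}]$ yields the asserted formula. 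The remaining steps amount to the same bookkeeping of Lefschetz classes as in Proposition \ref{H1}, so I expect no further obstacle.
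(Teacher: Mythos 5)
Your proposal is correct and follows exactly the route the paper intends: the paper's own proof of Proposition \ref{H2} simply says the argument is analogous to that of Proposition \ref{H1}, with the kernel now one-dimensional since every $\CC w\in\PP W\backslash Y_W$ has rank $6$, and your decomposition $\tilde H_2=\tilde H_{2,1}\sqcup\tilde H_{2,2}$ with the resulting factors $(L-1)(L^7-L)$ and $(L^7-L)(L^6-L)$ is precisely that argument carried out. Your additional remark that the kernels form a line subbundle, ensuring Zariski local triviality, is a correct and welcome elaboration of a point the paper leaves implicit.
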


\begin{proof}
The argument is completely analogous to that  of Proposition \ref{H1}. The only difference is 
that a form $\CC w\not\in Y_W$ has rank $6$ and thus a one-dimensional kernel.
\end{proof}

As  a corollary of Propositions \ref{H1} and \ref{H2} we get the formula for $[\tilde H]$.
\begin{proposition}\label{H}
The following equality holds in the Grothendieck ring.
$$
[\tilde H] =[\PP^6](L^7-L)(L^6-1)+ [Y_W](L^2-1)(L-1)L^7
$$
\end{proposition}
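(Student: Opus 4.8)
The plan is to add the formulas of Propositions \ref{H1} and \ref{H2} using the disjoint decomposition (\ref{disk}), which gives $[\tilde H]=[\tilde H_1]+[\tilde H_2]$ in $K_0(Var/\CC)$, and then to simplify the result. First I would tidy up the bracket in Proposition \ref{H2}: since
$$
(L-1)(L^7-L)+(L^7-L)(L^6-L)=(L^7-L)\big((L-1)+(L^6-L)\big)=(L^7-L)(L^6-1),
$$
Proposition \ref{H2} reads $[\tilde H_2]=\big([\PP^6]-[Y_W]\big)(L^7-L)(L^6-1)$. This already produces the term $[\PP^6](L^7-L)(L^6-1)$ of the claimed formula, so all that remains is to verify that the coefficients of $[Y_W]$ add up to $(L^2-1)(L-1)L^7$.

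To do that I would collect the $[Y_W]$-contributions: from Proposition \ref{H1} the coefficient is $(L^3-1)(L^7-L)+(L^7-L^3)(L^6-L)$, and from $[\tilde H_2]$ it is $-(L^7-L)(L^6-1)$. Expanding, the first equals $L^{13}+L^{10}-L^9-L^8-L^7+L$ and the second equals $-L^{13}+2L^7-L$, so their sum is $L^{10}-L^9-L^8+L^7=L^7(L^3-L^2-L+1)=L^7(L-1)(L^2-1)$. Putting the two pieces together yields exactly
$$
[\tilde H]=[\PP^6](L^7-L)(L^6-1)+[Y_W](L^2-1)(L-1)L^7.
$$

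I do not expect any genuine obstacle: the proposition is a bookkeeping corollary of the two preceding ones. The only places where a little care is needed are spotting the factorization $(L-1)+(L^6-L)=L^6-1$ that makes the $[\PP^6]$-coefficient come out in closed form, keeping track of the sign of the $[Y_W]$-term coming from the $-[Y_W]$ part of $[\tilde H_2]$, and recognizing $L^3-L^2-L+1=(L-1)(L^2-1)$ at the very end.
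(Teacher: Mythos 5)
Your proposal is correct and follows exactly the paper's route: the paper also derives Proposition \ref{H} by adding Propositions \ref{H1} and \ref{H2} via the decomposition \eqref{disk}, merely stating that the result "follows immediately" without displaying the algebra. Your explicit verification of the polynomial identities (which checks out) simply fills in the bookkeeping the paper leaves to the reader.
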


\begin{proof}
This follows immediately from \eqref{disk} and Propositions \ref{H1} and \ref{H2}.
\end{proof}

\subsection{Main theorem.}
We are now ready to prove our main result. We start with the following formula
derived from the calculations of the previous subsection.
\begin{proposition}\label{zero}
The following equality  holds in the Grothendieck ring.
$$
\Big([X_W]-[Y_W]\Big)(L^2-1)(L-1)L^7=0
$$
\end{proposition}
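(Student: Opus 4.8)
The plan is to compute $[\tilde H]$ in two different ways and compare. On one hand, combining \eqref{tilde} with Proposition \ref{HX} gives
$$
[\tilde H] = [H](L^2-1)(L^2-L) = \Big([G(2,7)][\PP^5]+[X_W]L^6\Big)(L^2-1)(L^2-L),
$$
and I would rewrite $(L^2-1)(L^2-L)=(L^2-1)(L-1)L$ so that the $[X_W]$-term reads $[X_W](L^2-1)(L-1)L^7$. On the other hand, Proposition \ref{H} gives
$$
[\tilde H] = [\PP^6](L^7-L)(L^6-1)+[Y_W](L^2-1)(L-1)L^7,
$$
in which the $[Y_W]$-term already has exactly the shape I want. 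Equating the two expressions for $[\tilde H]$ and cancelling the common $(L^2-1)(L-1)L^7$-multiple, the proposition reduces to the single identity
$$
[\PP^6](L^7-L)(L^6-1) = [G(2,7)][\PP^5](L^2-1)(L-1)L
$$
in the Grothendieck ring; equivalently, $\big([X_W]-[Y_W]\big)(L^2-1)(L-1)L^7$ equals the difference of the two sides of this identity, so it suffices to see that difference vanishes.

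To verify the identity I would use the standard classes $[\PP^n]=(L^{n+1}-1)/(L-1)$ and $[G(2,7)]=(L^7-1)(L^6-1)/\big((L^2-1)(L-1)\big)$, both of which hold in $K_0(Var/\CC)$ because $\PP^n$ and $G(2,7)$ admit affine cell decompositions (each is built from iterated Zariski locally trivial affine-space bundles, so the remark following Proposition \ref{HX} applies and these quotients are genuine polynomial classes in $L$). Substituting, and using $L^7-L=L(L^6-1)$ together with $[\PP^6](L-1)=L^7-1$ and $(L^6-1)/(L-1)=[\PP^5]$, one checks directly that both sides equal $L(L^7-1)(L^6-1)^2/(L-1)=L(L^7-1)(L^6-1)[\PP^5]$. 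Hence the two sides agree and the reduction is complete, which proves Proposition \ref{zero}.

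I do not expect a genuine obstacle here; the argument is bookkeeping once the two computations of $[\tilde H]$ from the previous subsection are in place. The one point that deserves care is that the final cancellation is an honest identity in $\ZZ[L]$, valid already at the level of polynomials before mapping into $K_0(Var/\CC)$. In particular it does not invoke any injectivity of $\ZZ[L]$ into the Grothendieck ring, which would be circular in view of what we are about to conclude about $L$.
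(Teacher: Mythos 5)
Your proposal is correct and follows essentially the same route as the paper: equate the two expressions for $[\tilde H]$ coming from Proposition \ref{HX} with \eqref{tilde} and from Proposition \ref{H}, and reduce to the polynomial identity $[\PP^6](L^7-L)(L^6-1)=[G(2,7)][\PP^5](L^2-1)(L^2-L)$, verified via the cell-decomposition classes of $\PP^n$ and $G(2,7)$. Your explicit remark that the final cancellation is an identity in $\ZZ[L]$ (so no injectivity of $\ZZ[L]\to K_0(Var/\CC)$ is invoked) is a nice touch of care that the paper leaves implicit.
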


\begin{proof}
We use Proposition \ref{H}  and Proposition  \ref{HX} with equation \eqref{tilde} to get
expressions for $[\tilde H]$, in terms of $[Y_W]$ and $[X_W]$ respectively.
By subtracting one from the other we get
$$
\Big([X_W]-[Y_W]\Big)(L^2-1)(L-1)L^7= [\PP^6](L^7-L)(L^6-1)-[G(2,7)][\PP^5](L^2-1)(L^2-L)
$$
which then equals zero in view of $[G(2,7)](L^2-1)(L^2-L) = (L^7-1)(L^7-L)$ and 
$[\PP^6](L^6-1)=[\PP^5](L^7-1)$.
\end{proof}

\begin{theorem}\label{main}
The class $L$ of the affine line is a zero divisor in the Grothendieck ring of varieties over $\CC$.
\end{theorem}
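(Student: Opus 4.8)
The plan is to extract a zero divisor directly from Proposition~\ref{zero}. That proposition gives us the single relation
$$
\Big([X_W]-[Y_W]\Big)(L^2-1)(L-1)L^7=0
$$
in $K_0(Var/\CC)$, so the obvious candidate for the annihilated class is $a=\big([X_W]-[Y_W]\big)(L^2-1)(L-1)L^5$, which satisfies $aL^2=0$, or even more simply $b=\big([X_W]-[Y_W]\big)(L^2-1)(L-1)L^6$, which satisfies $bL=0$. Thus the entire content of the theorem reduces to one thing: verifying that this class $b$ (equivalently $a$, equivalently $[X_W]-[Y_W]$ times the relevant power of $L-1$) is \emph{nonzero} in the Grothendieck ring. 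If it were zero, we would have learned nothing; the whole point is that $L$ kills something nontrivial.

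First I would argue that $[X_W]\neq[Y_W]$ in $K_0(Var/\CC)$. The cleanest way is to use a ring homomorphism out of $K_0(Var/\CC)$ that separates the two classes. The natural choice is the Hodge--Deligne $E$-polynomial (or just the $E$-polynomial specialization of the motivic class), $E(Z;u,v)=\sum_{p,q}(-1)^{p+q}h^{p,q}_c(Z)u^pv^q$, which factors through $K_0(Var/\CC)$ and sends $L\mapsto uv$. Since $X_W$ and $Y_W$ are smooth projective Calabi--Yau threefolds, their $E$-polynomials are determined by their Hodge numbers, and the two threefolds are known (R\o dland \cite{Rodland}, \cite{BC}) to have \emph{different} Hodge numbers --- indeed different Euler characteristics ($-120$ versus $-98$), which one sees already from $\chi$, the specialization $u=v=1$. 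Hence $E(X_W)\neq E(Y_W)$, so $[X_W]\neq[Y_W]$.

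Next I would promote this to the statement that $b\neq 0$, i.e.\ that multiplying the nonzero difference $[X_W]-[Y_W]$ by $(L^2-1)(L-1)L^6$ does not kill it. Under the $E$-polynomial homomorphism, $(L^2-1)(L-1)L^6$ maps to $((uv)^2-1)(uv-1)(uv)^6\in\ZZ[u,v]$, which is a \emph{nonzero} element of the integral domain $\ZZ[u,v]$; and $E(X_W)-E(Y_W)$ is a nonzero element of that same domain. Their product is therefore nonzero in $\ZZ[u,v]$, so $b$ has nonzero image under a ring homomorphism and hence $b\neq 0$ in $K_0(Var/\CC)$. Combined with $bL=0$ from Proposition~\ref{zero}, this exhibits $L$ as a zero divisor.

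The only real subtlety --- and the step I would be most careful about --- is the very last sentence: that $E(X_W)-E(Y_W)$ times a nonzero polynomial in $uv$ is nonzero. This is immediate because $\ZZ[u,v]$ has no zero divisors, but one must make sure the homomorphism used actually factors through the cut-and-paste relations (it does: $E$ is additive and multiplicative, which is exactly the defining property) and that the Hodge numbers of $X_W$ and $Y_W$ genuinely differ (this is the geometric input, supplied by the references). No delicate geometry of the Cayley hypersurface is needed beyond what Proposition~\ref{zero} already packaged; the theorem is essentially a one-line deduction once one has a faithful-enough invariant to certify non-vanishing. Choosing $b$ with $bL=0$ (rather than $a$ with $aL^2=0$) makes the statement as strong as possible: $L$ is a genuine zero divisor, not merely a zero divisor after squaring.
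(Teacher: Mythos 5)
There is a genuine gap, and it is fatal to your approach rather than a fixable detail. Your non-vanishing argument rests on the claim that $X_W$ and $Y_W$ have different Hodge numbers (you quote Euler characteristics $-120$ and $-98$). This is false: both threefolds have $h^{1,1}=1$ and $h^{2,1}=50$, hence the same Euler characteristic $-98$ and the same $E$-polynomial --- exactly as one expects from the fact that they share a mirror family and are derived equivalent. More structurally, you can see that your strategy cannot work without knowing any Hodge numbers at all: apply the $E$-polynomial homomorphism to the relation of Proposition \ref{zero} itself. You get $\big(E(X_W)-E(Y_W)\big)\cdot\big((uv)^2-1\big)(uv-1)(uv)^7=0$ in the domain $\ZZ[u,v]$, and the second factor is nonzero, so $E(X_W)=E(Y_W)$ is forced. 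The same reasoning rules out \emph{any} motivic measure valued in an integral domain in which the image of $L$ is nonzero; the entire content of the theorem is that $[X_W]-[Y_W]$ can only be detected by an invariant that kills $L$.

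The paper's proof does exactly that: it passes to the quotient $K_0(Var/\CC)/(L)$, which by Larsen--Lunts \cite{LL} is the free abelian group on stable birational equivalence classes of smooth projective varieties. Since $(L^2-1)(L-1)\equiv 1 \bmod L$, it suffices to show $[X_W]\not\equiv[Y_W] \bmod L$, i.e.\ that $X_W$ and $Y_W$ are not stably birational; this follows because they are not birational (Proposition \ref{facts}) and not uniruled, so the MRC fibration of $X\times\PP^k$ has base $X$ and stable birationality would force birationality. Your purely formal reduction of the theorem to the non-vanishing of $b=\big([X_W]-[Y_W]\big)(L^2-1)(L-1)L^6$ is fine as far as it goes; it is the certificate of non-vanishing that must be replaced by the Larsen--Lunts argument, and cannot be supplied by the $E$-polynomial or any similar domain-valued specialization.
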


\begin{proof}
In view of Proposition \ref{zero}, it suffices to show that 
$$
\Big([X_W]-[Y_W]\Big)(L^2-1)(L-1)
$$
is a nonzero element of the Grothendieck ring. In fact, we can argue that it is a nonzero element modulo $L$.
Indeed, if it were zero modulo $L$, this would mean that $[X_W]=[Y_W]\mod L$. This implies that $X_W$
is stably birational to $Y_W$, by \cite{LL}. This means that for some $k\geq 0$ the varieties $X_W\times \PP^k$
and $Y_W\times \PP^k$ are birational to each other. We now consider the MRC fibration \cite{KMM}, 
which is a birational invariant of an algebraic variety. Importantly, if $X$ is not uniruled (for example a Calabi-Yau variety)
then the base of the MRC fibration 
of $X\times \PP^k$ is $X$. Thus, birationality of  $X_W\times \PP^k$ and  $Y_W\times \PP^k$ implies
birationality of $X_W$ and $Y_W$, which is known to be false, see Proposition \ref{facts}.
\end{proof}

It was observed by Evgeny Shinder that the construction of this paper provides a negative answer to the cut-and-paste question 
of Larsen and Lunts \cite[Question 1.2]{LL} which asks whether any two varieties with equal classes in the Grothendieck ring 
can be cut up into isomorphic pieces.
\footnote{Another counterexample to the question was recently announced by Ilya Karzhemanov in \cite{Karzhemanov}.}
\begin{theorem}\label{Sh}
The cut-and-paste conjecture of Larsen and Lunts fails.
\end{theorem}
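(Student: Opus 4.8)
The plan is to exhibit two varieties with equal classes in $K_0(Var/\CC)$ that cannot be decomposed into pairwise isomorphic locally closed pieces. The natural candidates come directly from the construction above: set $A = X_W \times (\text{something})$ and $B = Y_W \times (\text{something})$, where the auxiliary factors are chosen so that the relation of Proposition \ref{zero} forces $[A] = [B]$. Concretely, since $\bigl([X_W]-[Y_W]\bigr)(L^2-1)(L-1)L^7 = 0$, and $(L^2-1)(L-1)L^7$ is (up to units and powers of $L$) the class of a variety, I would take $A = X_W \times \mathcal{Z}$ and $B = Y_W \times \mathcal{Z}$ for a suitable quasi-projective $\mathcal{Z}$ with $[\mathcal{Z}] = (L^2-1)(L-1)L^7$ (for instance a product of $\mathbb{A}^7$, $\mathbb{A}^1\setminus\{0\,\text{or a point}\}$-type pieces, or a $\mathbb{G}_m$-bundle construction); then $[A]=[B]$ in $K_0(Var/\CC)$ by Proposition \ref{zero}.

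The heart of the argument is then to show $A$ and $B$ are \emph{not} piecewise isomorphic. The key invariant is again the MRC fibration together with stable birational type, exactly as in the proof of Theorem \ref{main}. If $A$ and $B$ could be cut into pairwise isomorphic locally closed pieces, then in particular they would have the same \emph{classes of strata up to stable birational equivalence}, so by \cite{LL} this is automatic from $[A]=[B]$ and gives nothing new. The sharper point is that a piecewise isomorphism respects more refined birational data: the stratum of \emph{maximal dimension} whose closure dominates, or more robustly, one compares the classes in the quotient $K_0(Var/\CC)/L$, which by \cite{LL} has a basis of stably-birational classes of smooth projective varieties. The image of $[A]=[X_W]\cdot[\mathcal{Z}]$ there, after dividing by the power of $L$ coming from $\mathcal{Z}$, records the stable birational class of $X_W$ (since $X_W$ is not uniruled, its MRC base is itself), while for $B$ it records that of $Y_W$. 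Because $X_W$ and $Y_W$ are not birational (Proposition \ref{facts}), and — being Calabi--Yau threefolds of Picard number one — are not even stably birational by the MRC argument in Theorem \ref{main}, the two images differ. But a genuine piecewise isomorphism $A \cong B$ would force these refined invariants to agree, a contradiction.

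The main obstacle is making precise the claim that a cut-and-paste equivalence constrains birational invariants beyond what $[A]=[B]$ alone gives. The cleanest route I would take is: a piecewise isomorphism between $A$ and $B$ restricts, on a dense open subset, to an isomorphism between a dense open of $A$ and a dense open of $B$ (take the unique stratum on each side whose closure is all of $A$, resp. $B$ — generically a piecewise decomposition has a top-dimensional irreducible stratum that is open and dense, and matching pieces of equal dimension matches these). Hence $A$ and $B$ are birational. But $A = X_W \times \mathcal{Z}$ and $B = Y_W \times \mathcal{Z}$ with $\mathcal{Z}$ rational (a product of tori and affine spaces is rational), so $A$ is birational to $X_W \times \PP^{N}$ and $B$ to $Y_W \times \PP^{N}$ for $N = \dim \mathcal{Z}$; then the MRC-fibration argument of Theorem \ref{main} shows birationality of $A$ and $B$ implies birationality of $X_W$ and $Y_W$, contradicting Proposition \ref{facts}. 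The one delicate verification is that a decomposition into pairwise isomorphic locally closed subvarieties does produce such a top-dimensional dense open matching — this follows by descending induction on dimension, peeling off the top stratum on each side, using that isomorphic locally closed subvarieties have equal dimension and that the complement of the top stratum is a proper closed subset. I would therefore organize the proof as: (1) define $\mathcal{Z}$, $A$, $B$ and check $[A]=[B]$ via Proposition \ref{zero}; (2) observe $\mathcal{Z}$ is rational; (3) prove the dimension-counting lemma that a piecewise isomorphism yields a birational map $A \dashrightarrow B$; (4) conclude via MRC, as in Theorem \ref{main}, that this is impossible.
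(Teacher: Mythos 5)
Your proposal is correct and follows essentially the same route as the paper: the paper takes $\mathcal{Z}=GL(2,\CC)\times\CC^6$ (a concrete rational variety with class $(L^2-1)(L-1)L^7$, making $A$ and $B$ literally trivial bundles over $X_W$ and $Y_W$), deduces $[A]=[B]$ from Proposition \ref{zero}, and argues exactly as you do that a piecewise isomorphism forces $A$ and $B$ to be birational, hence $X_W$ and $Y_W$ stably birational, contradicting Proposition \ref{facts} via the MRC argument. Your step (3), matching the unique dense top-dimensional strata, is the right way to make precise what the paper leaves implicit.
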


\begin{proof}
The equality 
$$[X_W](L^2-1)(L-1)L^7= [Y_W](L^2-1)(L-1)L^7$$
implies that trivial $GL(2,\CC)\times \CC^6$ bundles over $X_W$ and $Y_W$ have the same class in the Grothendieck ring. If it were
possible to cut them into unions of isomorphic varieties, then $X_W\times GL(2,\CC)\times \CC^6$ would be birational
to $Y_W\times GL(2,\CC)\times \CC^6$. This implies that $X_W$ and $Y_W$ are stably birational, and thus birational, in contradiction 
with Proposition \ref{facts}.
\end{proof}

\begin{remark}
Our method works over any field of characteristic zero. It does not appear to work in positive characteristics, since results of
\cite{LL} are based on \cite{AKMW} which in turn relies on the resolution of singularities.
\end{remark}

\end{document}